\DeclareMathOperator{\diag}{diag}
\newtheorem{theorem}{Theorem}
\newtheorem{definition}{Definition}
\newtheorem{proposition}{Proposition}
\newcommand{\func}[1]{\operatorname{#1}}
\begin{document}

\title[]{Exponentially convergent trapezoidal rules to approximate
fractional powers of operators}
\thanks{This work was partially supported by GNCS-INdAM and FRA-University
of Trieste. The authors are members of the INdAM research group GNCS}
\keywords{Matrix functions, Single-exponential transform, Double-exponential
transform, Trapezoidal rule, Fractional Laplacian}
\author{Lidia Aceto}
\address{Lidia Aceto\\
Universit\`{a} del Piemonte Orientale\\
Dipartimento di Scienze e Innovazione Tecnologica, viale Teresa Michel 11,
15121 Alessandria\\
Italy}
\email{lidia.aceto@uniupo.it}
\author{Paolo Novati}
\address{Paolo Novati \\
Universit\`{a} di Trieste\\
Dipartimento di Matematica e Geoscienze, via Valerio 12/1, 34127 Trieste\\
Italy}
\email{novati@units.it}

\begin{abstract}
  In this paper we are interested in the approximation of fractional powers of
self-adjoint positive operators. Starting from the integral representation
of the operators, we apply the trapezoidal rule combined with a
single-exponential and a double-exponential transform of the integrand
function. For the first approach our aim is only to review some theoretical
aspects in order to refine the choice of the parameters that allow a faster
convergence. As for the double exponential transform, in this work we show
how to improve the existing error estimates for the scalar case and also
extend the analysis to operators. We report some numerical experiments to
show the reliability of the estimates obtained.
\end{abstract}



\maketitle

\section{Introduction}
In this work we are interested in the numerical approximation of ${\mathcal{L}}^{-\alpha }$, $\alpha \in (0,1).$ Here ${\mathcal{L}}$ is a self-adjoint positive operator acting in an Hilbert space ${\mathcal{H}}$ in which the eigenfunctions of ${\mathcal{L}}$ form an orthonormal basis of ${\mathcal{H}},$ so that ${\mathcal{L}}^{-\alpha }$ can be written through the spectral
decomposition of ${\mathcal{L}}$. In other words, for a given $g\in {\mathcal{H}}$, we have
\begin{equation}
{\mathcal{L}}^{-\alpha }g=\sum_{j=1}^{+\infty }\mu _{j}^{-\alpha }\langle
g,\varphi _{j}\rangle \varphi _{j}  \label{dec}
\end{equation}
where $\mu _{j}$ and $\varphi _{j}$ are the eigenvalues and the eigenfunctions of ${\mathcal{L}},$ respectively, and $\langle \cdot ,\cdot \rangle $ denotes the ${\mathcal{H}}$-inner product. Throughout the paper we also assume $\sigma (\mathcal{L})\subseteq \lbrack 1,+\infty )$, where $\sigma (\mathcal{L})$ denotes the spectrum of ${\mathcal{L}}$.

Applications of (\ref{dec}) include the numerical solution of fractional
equations involving the anomalous diffusion, in which ${\mathcal{L}}$ is
related to the Laplacian operator, and this is the main reason for which in
recent years a lot of attention has been placed on the efficient
approximation of fractional powers. Among the approaches recently introduced
we quote here the methods based on the best uniform rational approximations
of functions closely related to $\lambda ^{-\alpha }$ that have been studied
in \cite{HLMMV, HLMMP, HML, HM}. Another class of methods relies on
quadrature rules arising from the Dunford-Taylor integral representation of $%
\lambda ^{-\alpha }$ \cite{ABDN,AN0,AN,ANI,Bo,V19,V18}.  Very recently, time
stepping methods for a parabolic reformulation of fractional diffusion
equations,  proposed  in \cite{V15},   have been   interpreted
by Hofreither in \cite{H} as rational approximations of $\lambda ^{-\alpha }.$

In this work, starting from the integral representation
\begin{equation}
\mathcal{L}^{-\alpha }=\frac{2\sin (\alpha \pi )}{\pi }\int_{0}^{+\infty
}t^{2\alpha -1}({\mathcal{I}}+t^{2}\mathcal{L})^{-1}dt,\qquad \alpha \in
(0,1),  \label{intOR}
\end{equation}%
where ${\mathcal{I}}$ is the identity operator in ${\mathcal{H}}$, we
consider the trapezoidal rule applied to the single and the double-exponential transform of the integrand function. The former approach has
been extensively studied in \cite{Bo}, where the authors also provide
reliable error estimates. The rate of convergence has been shown to be of
type%
\begin{equation} \label{exr}
\exp (-c\sqrt{n}),
\end{equation}
where $n$ is closely related to the number of nodes. Our aim here is just to
review some theoretical aspects in order to refine the choice of the
parameters that allow faster convergence, even if, still of type (\ref{exr}). 
As for the double-exponential transform, widely investigated in \cite%
{OK,OK1,M,TSMM, Tref} for general scalar functions, in this work we show how
to improve the existing error estimates for the function $\lambda ^{-\alpha }
$. We also extend the analysis to operators, showing that it is possible to
reach a convergence rate of type%
\begin{equation*}
\exp \left( -c\sqrt{\frac{n}{\ln n}}\right) .
\end{equation*}%
While theoretically disadvantageous with respect to the single-exponential
approach, we show that the double-exponential approach is actually faster at
least for $\alpha \in \lbrack 1/2,1)$.

The paper is organized as follows. In Section \ref{sec2} we make a short
background concerning the trapezoidal rule with particular attention to
functions that decay exponentially at infinity. In Section \ref{sec3} we
review the existing convergence analysis for the trapezoidal rule combined
with a single-exponential transform and we refine the choice of the
parameters that allow a faster convergence.  Section \ref{sec4} is devoted to
the trapezoidal rule combined with a double-exponential transform. Here the
convergence analysis is derived for the approximation of the scalar function
$\lambda ^{-\alpha }$ and is then extended to the case of the operator ${%
\mathcal{L}}^{-\alpha }.$ Some concluding remarks are finally reported in
Section \ref{sec5}.


\section{A general convergence result for the trapezoidal rule}

\label{sec2}

Given a generic continuous function $f:\mathbb{R}\rightarrow
\mathbb{R}$, in this section we make a short background concerning the
trapezoidal approximation
\begin{equation*}
I(f)=\int_{-\infty }^{+\infty }f(x)dx\approx h\sum_{\ell =-\infty }^{+\infty
}f(\ell h),
\end{equation*}%
where $h$ is a suitable positive value. Given $M$ and $N$ positive integers,
we denote the truncated trapezoidal rule by
\begin{equation*}
T_{M,N,h}(f)=h\sum_{\ell =-M}^{N}f(\ell h).
\end{equation*}%
For the error we have
\begin{equation}
\mathcal{E}_{M,N,h}(f):=\left\vert I(f)-T_{M,N,h}(f)\right\vert \leq
\mathcal{E}_{D}+\mathcal{E}_{T_{L}}+\mathcal{E}_{T_{R}},  \notag
\label{3pezzi}
\end{equation}%
where
\begin{align*}
\mathcal{E}_{D}& =\left\vert \int_{-\infty }^{+\infty }f(x)dx-h\sum_{\ell
=-\infty }^{+\infty }f(\ell h)\right\vert , \\
\mathcal{E}_{T_{L}}& =h\sum_{\ell =-\infty }^{-M-1}\left\vert f(\ell
h)\right\vert ,\qquad \mathcal{E}_{T_{R}}=h\sum_{\ell =N+1}^{+\infty
}\left\vert f(\ell h)\right\vert .
\end{align*}%
The quantities $\mathcal{E}_{D}$ and $\mathcal{E}_{T}:=\mathcal{E}_{T_{L}}+%
\mathcal{E}_{T_{R}}$ are referred to as the discretization error and the
truncation error, respectively.

\begin{definition}
\cite[Definition 2.12]{LB} \label{def1} For $d>0,$ let $\mathcal{D}_{d}$ be
the infinite strip domain of width $2d$ given by
\begin{equation*}
\mathcal{D}_{d}=\{\zeta \in \mathbb{C}:\,|\func{Im}(\zeta )|<d\}.
\end{equation*}%
Let $B(\mathcal{D}_{d})$ be the set of functions analytic in $\mathcal{D}%
_{d} $ that satisfy
\begin{equation*}
\int_{-d}^{d}|f(x+i\eta )|d\eta =\mathcal{O}(|x|^{a}),\quad x\rightarrow \pm
\infty ,\quad 0\leq a<1,
\end{equation*}%
and
\begin{equation*}
\mathcal{N}(f,d)=\lim_{\eta \rightarrow d^{-}}\left\{ \int_{-\infty
}^{+\infty }|f(x+i\eta )|dx+\int_{-\infty }^{+\infty }|f(x-i\eta
)|dx\right\} <+\infty .
\end{equation*}
\end{definition}

The next theorem gives an estimate for the discretization error of the
trapezoidal rule when applied to functions in $B(\mathcal{D}_{d})$.

\begin{theorem}
\label{teo1} \cite[Theorem 2.20]{LB} Assume $f\in B(\mathcal{D}_{d})$. Then%
\begin{equation}
\mathcal{E}_{D}\leq \frac{\mathcal{N}(f,d)}{2\sinh (\pi d/h)}e^{-\pi d/h}.
\label{Edisc}
\end{equation}
\end{theorem}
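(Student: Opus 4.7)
The plan is to express the discretization error as a complex contour integral whose kernel $\cot(\pi z/h)$ has simple poles exactly at the trapezoidal nodes $z=\ell h$ (with residue $h/\pi$), and then exploit the exponential decay of an associated remainder on the boundary of the analyticity strip. Accordingly, I would carry out three steps: set up the contour integral and apply the residue theorem; extract from $\cot(\pi z/h)$ the integral $I(f)$ itself; and estimate what is left.

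For the first step, I would integrate $f(z)\cot(\pi z/h)$ counterclockwise around the rectangle in $\mathcal{D}_{d}$ with horizontal sides $\Gamma_{L},\Gamma_{U}$ at heights $-(d-\epsilon)$ and $+(d-\epsilon)$ (oriented left-to-right) and vertical sides at $x=\pm R_{n}$, choosing $R_{n}=(n+\tfrac{1}{2})h$ so that $|\cot(\pi z/h)|$ stays uniformly bounded on the vertical segments. Since $\operatorname{Res}_{z=\ell h}f(z)\cot(\pi z/h)=\tfrac{h}{\pi}f(\ell h)$, the residue theorem gives
\[
\oint f(z)\cot(\pi z/h)\,dz=2ih\sum_{|\ell|\le n} f(\ell h).
\]

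For the second step, I would use the elementary identities
\[
\cot(\pi z/h)=-i+\tfrac{2i}{1-e^{-2\pi i z/h}}\ \ (\operatorname{Im}z>0),\qquad \cot(\pi z/h)=i+\tfrac{2i}{e^{2\pi i z/h}-1}\ \ (\operatorname{Im}z<0),
\]
which split $\cot$ into a constant $\pm i$ and a remainder that is exponentially small on the boundary of the strip. The constant pieces, combined via Cauchy's theorem (the horizontal contours may be shifted back to the real axis thanks to analyticity of $f$ in $\mathcal{D}_{d}$), reassemble to give $2i\,I(f)$. After sending $R_{n}\to\infty$, dividing by $2i$ and rearranging, one is left with
\[
h\sum_{\ell}f(\ell h)-I(f)=\int_{\Gamma_{L}}\frac{f(z)\,dz}{e^{2\pi iz/h}-1}-\int_{\Gamma_{U}}\frac{f(z)\,dz}{1-e^{-2\pi iz/h}}.
\]
On each horizontal edge $|e^{\mp 2\pi iz/h}|=e^{2\pi(d-\epsilon)/h}$, so both denominators are bounded below by $e^{2\pi(d-\epsilon)/h}-1$. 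Taking absolute values, summing over the two edges and letting $\epsilon\to0^{+}$ produces
\[
\mathcal{E}_{D}\ \le\ \frac{\mathcal{N}(f,d)}{e^{2\pi d/h}-1}\ =\ \frac{\mathcal{N}(f,d)}{2\sinh(\pi d/h)}\,e^{-\pi d/h},
\]
which is exactly \eqref{Edisc}.

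The main technical obstacle is the first step: rigorously discarding the vertical sides of the rectangle in the limit. The condition $\int_{-d}^{d}|f(x+i\eta)|d\eta=\mathcal{O}(|x|^{a})$ with $a<1$ does not by itself force pointwise decay, but combined with the finiteness of $\mathcal{N}(f,d)$ (which by Hardy-type convexity gives a uniform-in-$\eta$ bound on $\int|f(x+i\eta)|\,dx$ inside the strip) and the uniform bound on $|\cot(\pi z/h)|$ at the midpoints $R_{n}=(n+\tfrac{1}{2})h$, a Fubini/subsequence argument yields a subsequence $n_{k}$ along which the vertical contribution vanishes. The remainder of the proof is essentially sign and orientation bookkeeping.
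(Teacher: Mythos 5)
The paper offers no proof of this statement: it is quoted verbatim from \cite[Theorem 2.20]{LB}, so there is nothing internal to compare against. Your argument is correct and is, in substance, the classical residue-calculus proof of the exponentially convergent trapezoidal rule --- closer in presentation to the $\cot$-kernel derivation in Trefethen--Weideman \cite{Tref} than to Lund--Bowers' route through the sinc interpolation remainder, but mathematically the same mechanism. The bookkeeping checks out: the splittings $\cot(\pi z/h)=-i+2i/(1-e^{-2\pi i z/h})$ (upper half) and $\cot(\pi z/h)=i+2i/(e^{2\pi i z/h}-1)$ (lower half) are the correct ones, the constant pieces do recombine to $2i\,I(f)$ after shifting to the real axis, the lower bound $|1-e^{\mp 2\pi i z/h}|\geq e^{2\pi(d-\epsilon)/h}-1$ on the horizontal edges is right, and $(e^{2\pi d/h}-1)^{-1}=e^{-\pi d/h}/(2\sinh(\pi d/h))$ reproduces (\ref{Edisc}) exactly. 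You also correctly identify the only delicate point, the vertical sides: the $\mathcal{O}(|x|^{a})$ condition alone does not kill them, but finiteness of $\mathcal{N}(f,d)$ together with an $H^{1}$-type bound on interior horizontal lines puts $x\mapsto\int_{-d}^{d}|f(x+i\eta)|\,d\eta$ in $L^{1}(\mathbb{R})$, and an averaging argument then selects abscissae near $(n+\tfrac12)h$ (where $|\cot(\pi z/h)|\leq 1$) along which the vertical contributions vanish. That is exactly how the cited reference disposes of them, so your sketch is a complete and faithful reconstruction of the standard proof.
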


\begin{theorem}
\label{teo2} Assume $f\in B(\mathcal{D}_{d})$ and that there are positive
constants $\beta ,\gamma $ and $C$ such that
\begin{equation}
|f(x)|\leq C\left\{
\begin{array}{cc}
\exp (-\beta |x|), & x<0, \\
\exp (-\gamma |x|), & x\geq 0.%
\end{array}%
\right.  \label{boundfSE}
\end{equation}%
Then,
\begin{equation}
\mathcal{E}_{M,N,h}(f)\leq \frac{N(f,d)}{2\sinh (\pi d/h)}e^{-\pi d/h}+\frac{%
C}{\beta }e^{-\beta Mh}+\frac{C}{\gamma }e^{-\gamma Nh}.  \label{bb}
\end{equation}
\end{theorem}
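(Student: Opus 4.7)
The plan is to apply the triangle-inequality splitting $\mathcal{E}_{M,N,h}(f)\leq \mathcal{E}_D+\mathcal{E}_{T_L}+\mathcal{E}_{T_R}$ already introduced before the theorem, and to bound each of the three pieces separately. The discretization piece $\mathcal{E}_D$ is handled for free: since $f\in B(\mathcal{D}_d)$ by hypothesis, Theorem \ref{teo1} immediately yields the first summand $\frac{\mathcal{N}(f,d)}{2\sinh(\pi d/h)}e^{-\pi d/h}$ of the right-hand side of (\ref{bb}). The decay assumption (\ref{boundfSE}) plays no role here and is instead used only to control the truncation tails.

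For the right tail I would use the pointwise bound $|f(\ell h)|\leq C e^{-\gamma \ell h}$ valid for $\ell\geq 0$, which reduces $\mathcal{E}_{T_R}$ to a geometric series:
\begin{equation*}
\mathcal{E}_{T_R}\leq Ch\sum_{\ell=N+1}^{+\infty}e^{-\gamma \ell h}
= \frac{Ch\,e^{-\gamma(N+1)h}}{1-e^{-\gamma h}}
= \frac{Ch}{e^{\gamma h}-1}\,e^{-\gamma N h}.
\end{equation*}
The elementary inequality $e^{\gamma h}\geq 1+\gamma h$ gives $\frac{h}{e^{\gamma h}-1}\leq \frac{1}{\gamma}$, producing the advertised bound $\frac{C}{\gamma}e^{-\gamma N h}$. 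Equivalently, one can compare $h\,e^{-\gamma \ell h}$ with $\int_{(\ell-1)h}^{\ell h}e^{-\gamma t}dt$ using the monotonicity of $e^{-\gamma t}$ and telescope to obtain $\mathcal{E}_{T_R}\leq C\int_{Nh}^{+\infty}e^{-\gamma t}dt=\frac{C}{\gamma}e^{-\gamma N h}$. The left tail $\mathcal{E}_{T_L}$ is treated in exactly the same way with $\gamma,N$ replaced by $\beta,M$ and with $|f(-\ell h)|\leq Ce^{-\beta \ell h}$ for $\ell\geq 1$. Summing the three bounds yields (\ref{bb}).

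I do not anticipate any real obstacle: the proof is an assembly of Theorem \ref{teo1} with two elementary tail estimates. The only delicate bookkeeping is making sure the geometric-sum factor $e^{-\gamma h}$ coming from the shift $\ell=N+1$ is absorbed correctly, so that the exponent in the final estimate is $-\gamma N h$ rather than $-\gamma(N+1)h$; the slack in $\frac{h}{e^{\gamma h}-1}\leq\frac{1}{\gamma}$ (which is not tight in general) is precisely what accommodates that shift.
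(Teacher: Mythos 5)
Your proposal is correct and follows exactly the paper's own route: the paper bounds $\mathcal{E}_{T_L}$ and $\mathcal{E}_{T_R}$ directly from (\ref{boundfSE}) (calling the tail estimates immediate) and invokes Theorem \ref{teo1} for the discretization error. You have simply supplied the geometric-series/integral-comparison details that the paper leaves implicit, and your bookkeeping of the shift from $e^{-\gamma(N+1)h}$ to $e^{-\gamma Nh}$ is accurate.
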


\begin{proof}
By (\ref{boundfSE}) we immediately have
\begin{equation*}
\mathcal{E}_{T_{L}}\leq \frac{C}{\beta }e^{-\beta Mh},\quad \mathcal{E}%
_{T_{R}}\leq \frac{C}{\gamma }e^{-\gamma Nh}.
\end{equation*}%
Using Theorem \ref{teo1} we obtain (\ref{bb}).
\end{proof}

The above result states that for functions that decay exponentially for $%
x\rightarrow \pm \infty $ it may be possible to have exponential convergence
after a proper selection of $h$. When working with the more general situation%
\begin{equation}
I(g):=\int_{a}^{b}g(t)dt,  \label{int1}
\end{equation}%
one can consider a suitable conformal map
\begin{equation*}
\psi :(-\infty ,+\infty )\rightarrow (a,b),
\end{equation*}%
and, through the change of variable $t=\psi (x)$, transform (\ref{int1}) to
\begin{equation*}
I(g_{\psi }):=\int_{-\infty }^{+\infty }g_{\psi }(x)dx,\qquad g_{\psi
}(x)=g(\psi (x))\psi ^{\prime }(x).
\end{equation*}%
A suitable choice of the mapping $\psi $ may allow to work with a function $%
g_{\psi }$ that fulfills the hypothesis of Theorem \ref{teo2} so that $I(g)$
can be evaluated with an error that decays exponentially.

Since the aim of the paper is the computation of $\mathcal{L}^{-\alpha }$
with $\sigma (\mathcal{L})\subseteq \lbrack 1,+\infty ),$ for $\lambda \geq
1 $ we consider now the integral representation (\ref{intOR})
\begin{equation}
\lambda ^{-\alpha }=\frac{2\sin (\alpha \pi )}{\pi }\int_{0}^{+\infty
}t^{2\alpha -1}(1+t^{2}\lambda )^{-1}dt,\qquad \alpha \in (0,1).
\label{orig}
\end{equation}%
Defining
\begin{equation}
g_{\lambda }(t):=t^{2\alpha -1}(1+t^{2}\lambda )^{-1},  \label{glam}
\end{equation}%
and a change of variable $t=\psi (x)$, $\psi :(-\infty ,+\infty )\rightarrow
(0,+\infty )$, let%
\begin{equation}  \label{NUM}
g_{\lambda ,\psi }(x)=g_{\lambda }(\psi (x))\psi ^{\prime }(x).
\end{equation}%
Let moreover%
\begin{equation*}
\mathcal{Q}_{M,N,h}^{\alpha }(g_{\lambda ,\psi })=\frac{2\sin (\alpha \pi )}{%
\pi }h\sum_{\ell =-M}^{N}g_{\lambda ,\psi }(\ell h)
\end{equation*}%
be the truncated trapezoidal rule for the computation of $\lambda ^{-\alpha
} $, that is, for the computation of%
\begin{equation*}
\frac{2\sin (\alpha \pi )}{\pi }\int_{0}^{+\infty }g_{\lambda }(t)dt=\frac{%
2\sin (\alpha \pi )}{\pi }\int_{-\infty }^{+\infty }g_{\lambda ,\psi }(x)dx.
\end{equation*}%
We denote the error by
\begin{align}
E_{M,N,h}(\lambda ) &=\left\vert \lambda ^{-\alpha }-\mathcal{Q}%
_{M,N,h}^{\alpha }(g_{\lambda ,\psi })\right\vert  \notag \\
&=\frac{2\sin (\alpha \pi )}{\pi }\mathcal{E}_{M,N,h}(g_{\lambda ,\psi }),
\label{erod}
\end{align}%
and for operator argument%
\begin{equation}
E_{M,N,h}(\mathcal{L})=\left\Vert {\mathcal{L}}^{-\alpha }-\mathcal{Q}%
_{M,N,h}^{\alpha }(g_{{\mathcal{L}},\psi })\right\Vert _{\mathcal{H}%
\rightarrow \mathcal{H}}.  \label{erop}
\end{equation}%
The remainder of the paper is devoted to the analysis of two special choices
for $\psi $, the single-exponential (SE) and the double-exponential (DE)
transforms.


\section{The Single-Exponential transform}

\label{sec3}

The SE transform is defined by%
\begin{equation}
\psi _{SE}(x)=\exp (x),  \label{SE3}
\end{equation}%
so that from (\ref{glam}) and (\ref{NUM}) we get
\begin{equation*}
g_{\lambda ,\psi _{SE}}(x)=e^{2\alpha x}(1+e^{2x}\lambda )^{-1}.
\end{equation*}%
Since the poles of this function are given by
\begin{equation*}
x_{k}=-\frac{1}{2}\log \lambda -i(2k+1)\frac{\pi }{2},\qquad k\in \mathbb{Z},
\end{equation*}%
we have that $g_{\lambda ,\psi _{SE}}$ is analytic in
\begin{equation*}
\mathcal{D}_{\psi _{SE}}=\left\{ \zeta \in \mathbb{C}:\,|\func{Im}(\zeta )|<%
\frac{\pi }{2}\right\} ,
\end{equation*}%
that is, the strip domain with $d=\pi /2,$ independently of $\alpha $ and $%
\lambda .$ Now, in order to prove that $g_{\lambda ,\psi _{SE}}$ belongs to $%
B(\mathcal{D}_{\psi _{SE}})$ (see Definition \ref{def1}), following the
analysis given in \cite{Bo} we first note that for $\eta \in \mathbb{R},$ $%
|\eta |<\pi /2$ and $\lambda \geq 1,$
\begin{equation*}
\left\vert (1+e^{2(x+i\eta )}\lambda )^{-1}\right\vert \leq \left\{
\begin{array}{cl}
1, & x<0, \\
e^{-2x}, & x\geq 0.%
\end{array}%
\right.
\end{equation*}%
Therefore,
\begin{equation}
\left\vert g_{\lambda ,\psi _{SE}}(x+i\eta )\right\vert \leq \left\{
\begin{array}{ll}
e^{2\alpha x}, & \mbox{for }x<0, \\
e^{-2(1-\alpha )x}, & \mbox{for }x\geq 0.%
\end{array}%
\right.  \label{st}
\end{equation}%
This implies that
\begin{align*}
\mathcal{N}\left( g_{\lambda ,\psi _{SE}},\pi /2\right) & =\lim_{\eta
\rightarrow (\pi /2)^{-}}\left\{ \int_{-\infty }^{+\infty }\left\vert
g_{\lambda ,\psi _{SE}}(x+i\eta )\right\vert dx+\int_{-\infty }^{+\infty
}\left\vert g_{\lambda ,\psi _{SE}}(x-i\eta )\right\vert dx\right\} \\
& \leq \frac{1}{\alpha (1-\alpha )},
\end{align*}%
and also that
\begin{equation*}
\int_{-\pi /2}^{\pi /2}|g_{\lambda ,\psi _{SE}}(x+i\eta )|d\eta =\mathcal{O}%
(1),\quad x\rightarrow +\infty .
\end{equation*}%
Therefore, by (\ref{def1}) we can conclude that $g_{\lambda ,\psi
_{SE}}$ belongs to $B(\mathcal{D}_{\psi _{SE}}).$ By (\ref{Edisc}), for the
discretization error we have
\begin{equation*}
\left\vert \int_{-\infty }^{+\infty }g_{\lambda ,\psi
_{SE}}(x)dx-h\sum_{\ell =-\infty }^{+\infty }g_{\lambda ,\psi _{SE}}(\ell
h)\right\vert \leq \frac{1}{\alpha (1-\alpha )}\frac{e^{-\pi d/h}}{2\sinh
(\pi d/h)},\quad d=\pi /2.
\end{equation*}

Since%
\begin{equation*}
\frac{e^{-t}}{2\sinh (t)}=e^{-2t}\left( 1+\mathcal{O}(e^{-2t})\right) \quad
\text{as }t\rightarrow +\infty ,
\end{equation*}%
by (\ref{teo2}), for $h\leq 2\pi d$ we obtain
\begin{align}
\mathcal{E}_{M,N,h}(g_{\lambda ,\psi _{SE}})& =\left\vert \int_{-\infty
}^{+\infty }g_{\lambda ,\psi _{SE}}(x)dx-h\sum_{\ell =-M}^{N}g_{\lambda
,\psi _{SE}}(\ell h)\right\vert  \notag \\
& \leq \frac{1}{\alpha (1-\alpha )}e^{-2\pi d/h}+\frac{1}{2\alpha }%
e^{-2\alpha Mh}+\frac{1}{2(1-\alpha )}e^{-2(1-\alpha )Nh}.  \label{trexp}
\end{align}%
After choosing $h$, the contribute of the three exponentials can be equalized
by taking $M$ and $N$ such that%
\begin{equation*}
\pi d/h\approx \alpha Mh\approx (1-\alpha )Nh,
\end{equation*}%
that is
\begin{equation*}
M=\left\lceil \frac{\pi d}{\alpha h^{2}}\right\rceil ,\quad N=\left\lceil
\frac{\pi d}{(1-\alpha )h^{2}}\right\rceil ,
\end{equation*}%
where $\left\lceil \cdot \right\rceil $ is the ceiling function. Denoting by $%
n=M+N+1$ the total number of inversions we have that%
\begin{equation*}
n\approx \frac{\pi d}{h^{2}}\frac{1}{\alpha (1-\alpha )},
\end{equation*}%
and therefore by (\ref{trexp}) we obtain%
\begin{equation*}
\mathcal{E}_{M,N,h}(g_{\lambda ,\psi _{SE}})\leq \frac{3}{2}\frac{1}{\alpha
(1-\alpha )}\exp \left( -2\sqrt{\pi d\alpha (1-\alpha )}\sqrt{n}\right) .
\end{equation*}%
By (\ref{erop}), since $\mathcal{L}$ is assumed to be self-adjoint and $%
\sigma (\mathcal{L})\subseteq \lbrack 1,+\infty )$, we have that%
\begin{equation*}
E_{M,N,h}(\mathcal{L})\leq \frac{2\sin (\alpha \pi )}{\pi }\max_{\lambda
\geq 1}\mathcal{E}_{M,N,h}(g_{\lambda ,\psi _{SE}}),
\end{equation*}%
and then, finally, taking $d=\pi /2$ we obtain
\begin{equation}
E_{M,N,h}(\mathcal{L})\leq \frac{\sin (\alpha \pi )}{\pi }\frac{3}{\alpha
(1-\alpha )}\exp \left( -\pi \sqrt{2\alpha (1-\alpha )}\sqrt{n}\right)
,\quad n=M+N+1.  \label{stima}
\end{equation}%
The analysis just presented is almost identical to the one given in \cite{Bo}. 
Nevertheless in that paper the authors define $d=\pi /4$, while here we
have shown that one can take $d=\pi /2$. This choice produces a remarkable
speedup, as shown in  Figure \ref{Figure1}, where for different values of $%
\alpha $\ we have considered the error in the computation of $\mathcal{L}%
^{-\alpha }$ for the artificial operator
\begin{equation}
\mathcal{L}=\left[ \diag(1, 2,\dots,100)\right] ^{8},\quad \sigma (\mathcal{L}%
)\subseteq \lbrack 1,10^{16}]\text{.}  \label{scex}
\end{equation}

\begin{figure}[htbp]
  \centering
\includegraphics[width=0.95\textwidth]{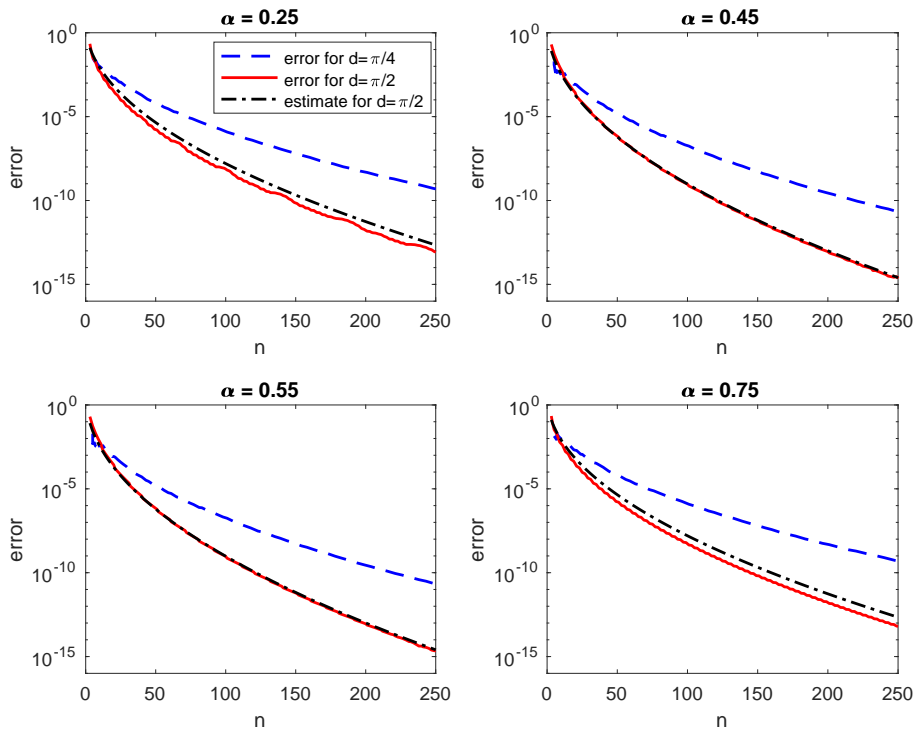}
\caption{Error for the trapezoidal rule applied with the single-exponential
transform with $d= \pi /4$ and $d=\pi /2$, and error estimate
given by (\ref{stima}). }
\label{Figure1}
\end{figure}


\section{Double-Exponential transformation}
\label{sec4}

The DE transform we use here is given by
\begin{equation}
\psi _{DE}(x)=\exp \left( \frac{\pi }{2}\sinh (x)\right) .  \label{DE3}
\end{equation}%
We consider in (\ref{orig}) the change of variable
\begin{equation*}
\tau t^{2}=\left( \psi _{DE}(x)\right) ^{2}=\exp (\pi \sinh (x)),\qquad \tau
>0.
\end{equation*}%
The function involved in this case is
\begin{align}
g_{\lambda ,\psi _{DE}}(x) &=\frac{\pi }{2}\tau ^{1-\alpha }\frac{\exp
(\alpha \pi \sinh (x))}{\tau +\lambda \exp (\pi \sinh (x))}\cosh (x)  \notag
\\
&=\frac{\pi }{2}\lambda ^{-\alpha }\frac{\left( \lambda /\tau \exp (\pi
\sinh (x))\right) ^{\alpha }}{1+\lambda /\tau \exp (\pi \sinh (x))}\cosh (x),
\label{ide}
\end{align}%
and we employ the trapezoidal rule to compute%
\begin{equation*}
\lambda ^{-\alpha }=\frac{2\sin (\alpha \pi )}{\pi }\int_{-\infty }^{+\infty
}g_{\lambda ,\psi _{DE}}(x)dx.
\end{equation*}%
The parameter $\tau $ needs to be selected in some way and the analysis is
provided in Section \ref{sectau}. Its introduction is motivated by the fact
that, when moving from $\lambda $ to $\mathcal{L}$, the method (the choice of
$M$, $N$ and $h$) and the error estimates have to be derived by working
uniformly in the interval $[1,+\infty )$ containing $\sigma (\mathcal{L})$.
As in the SE case, the function $g_{\lambda ,\psi _{DE}}(x)$ exhibits a fast
decay for $x\rightarrow \pm \infty $, but the definition of the strip of
analyticity is now much more difficult to handle since everything now
depends on $\lambda $ and $\tau $.


\subsection{Asymptotic behavior of the integrand function}

In order to apply  Theorem \ref{teo2} we need to study $|g_{\lambda ,\psi
_{DE}}(x+i\eta )|$. From (\ref{ide}) we have
\begin{equation*}
\left\vert g_{\lambda ,\psi _{DE}}(x+i\eta )\right\vert =\frac{\pi }{2}%
\lambda ^{-\alpha }\left\vert \frac{\left( \lambda /\tau \exp (\pi \sinh
(x+i\eta ))\right) ^{\alpha }}{1+\lambda /\tau \exp (\pi \sinh (x+i\eta ))}%
\right\vert |\cosh (x+i\eta )|.
\end{equation*}%
After simple manipulations based on standard relations we find
\begin{equation*}
\left\vert \cosh (x+i\eta )\right\vert =\sqrt{\cosh ^{2}x-\sin ^{2}\eta },
\end{equation*}%
and therefore
\begin{equation*}
\left\vert \cosh (x+i\eta )\right\vert \leq \cosh x.
\end{equation*}%
Moreover%
\begin{equation*}
\left\vert \left( \lambda /\tau \exp (\pi \sinh (x+i\eta ))\right) ^{\alpha
}\right\vert =\left( \frac{\lambda }{\tau }\right) ^{\alpha }\left\vert \exp
(\alpha \pi \sinh x\cos \eta ))\right\vert .
\end{equation*}%
In addition, we can bound the denominator using the results given in \cite[%
p. 388]{OK1}, that is,
\begin{equation*}
\left\vert \frac{1}{1+\lambda /\tau \exp (\pi \sinh (x+i\eta ))}\right\vert
\leq \frac{1}{1+\lambda /\tau \exp (\pi \sinh x\cos \eta )\cos (\pi /2\sin
\eta )}.
\end{equation*}%
From the above relations we find
\begin{equation*}
\left\vert g_{\lambda ,\psi _{DE}}(x+i\eta )\right\vert \leq \frac{\pi }{2}%
\lambda ^{-\alpha }\frac{\cosh x}{\cos (\pi /2\sin \eta )}G_{\alpha }(x,\eta
),
\end{equation*}%
where%
\begin{equation*}
G_{\alpha }(x,\eta )=\frac{\left( \lambda /\tau \exp (\pi \sinh x\cos \eta
)\right) ^{\alpha }}{1+\lambda /\tau \exp (\pi \sinh x\cos \eta )}.
\end{equation*}%
Let $x^{\ast }$ be such that
\begin{equation*}
\pi \sinh x^{\ast }\cos \eta =\ln (\tau /\lambda );
\end{equation*}%
we have
\begin{equation}
G_{\alpha }(x,\eta )\leq \left\{
\begin{array}{ll}
(\lambda /\tau )^{\alpha }\exp (\alpha \pi \cos \eta \sinh x), & x\leq
x^{\ast }, \\
(\lambda /\tau )^{\alpha -1}\exp (-(1-\alpha )\pi \cos \eta \sinh x), &
x>x^{\ast }.%
\end{array}%
\right.   \label{31}
\end{equation}


\subsection{Error estimate for the scalar case}

The bound (\ref{31}) implies that
\begin{subequations}
\begin{align*}
\mathcal{N}\left( g_{\lambda ,\psi _{DE}},d\right) &=\lim_{\eta \rightarrow
d^{-}}\left\{ \int_{-\infty }^{+\infty }\left\vert g_{\lambda ,\psi
_{DE}}(x+i\eta )\right\vert dx+\int_{-\infty }^{+\infty }\left\vert
g_{\lambda ,\psi _{DE}}(x-i\eta )\right\vert dx\right\} \\
&\leq \lim_{\eta \rightarrow d^{-}}\pi \lambda ^{-\alpha }\left\{ \frac{1}{%
\cos (\pi /2\sin \eta )}\int_{-\infty }^{+\infty }G_{\alpha }(x,\eta )\cosh
xdx\right\} \\
&\leq \lim_{\eta \rightarrow d^{-}}\frac{\pi \lambda ^{-\alpha }}{\cos (\pi
/2\sin \eta )}\left\{ (\lambda /\tau )^{\alpha }\int_{-\infty }^{x^{\ast
}}\exp (\alpha \pi \cos \eta \sinh x)\cosh x\,dx\right. + \\
&+\left. (\lambda /\tau )^{\alpha -1}\int_{x^{\ast }}^{+\infty }\exp
(-(1-\alpha )\pi \cos \eta \sinh x)\cosh x\,dx\right\} \\
&\leq \frac{1}{\alpha (1-\alpha )}\frac{2}{\cos d\cos (\pi /2\sin d)}%
\lambda ^{-\alpha }.
\end{align*}
\end{subequations}
In addition, assuming $d=d(\lambda ,\tau )<\pi /2$, it can be observed that
\begin{align*}
\int_{-d(\lambda ,\tau )}^{d(\lambda ,\tau )}|g_{\lambda ,\psi
_{DE}}(x+i\eta )|d\eta &\leq \frac{\pi }{2}\lambda ^{-\alpha
}\int_{-d(\lambda ,\tau )}^{d(\lambda ,\tau )}\frac{G_{\alpha }(x,\eta
)\cosh x}{\cos (\pi /2\sin \eta )}d\eta \\
&=\mathcal{O}(1)\quad \text{for }x\rightarrow \pm \infty .
\end{align*}
Using  (\ref{teo1}), for the discretization error we have
\begin{equation*}
\left\vert \int_{-\infty }^{+\infty }g_{\lambda ,\psi
_{DE}}(x)dx-h\sum_{\ell =-\infty }^{+\infty }g_{\lambda ,\psi _{DE}}(\ell
h)\right\vert \leq \xi (d)\frac{1}{\alpha (1-\alpha )}\lambda ^{-\alpha }%
\frac{e^{-\pi d/h}}{2\sinh (\pi d/h)},
\end{equation*}%
where%
\begin{equation}
\xi (d)=\frac{2}{{\cos d\cos (\pi /2\sin d)}}.  \label{CD}
\end{equation}

The remaining task is to estimate the truncation error. Using (\ref{31}) we
obtain
\begin{align*}
h\sum\nolimits_{\ell =-\infty }^{-M-1}\left\vert g_{\lambda ,\psi
_{DE}}(\ell h)\right\vert & \leq \frac{\pi }{2}\tau ^{-\alpha
}h\sum\nolimits_{\ell =-\infty }^{-M-1}\exp (\alpha \pi \sinh (\ell h))\cosh
(\ell h) \\
& \leq \frac{\pi }{2}\tau ^{-\alpha }\int_{-\infty }^{-Mh}\exp (\alpha \pi
\sinh x)\cosh (x)dx \\
& \leq \frac{\tau ^{-\alpha }}{2\alpha }\exp \left( -\alpha \pi \sinh
(Mh)\right) \\
& \leq \frac{\tau ^{-\alpha }}{2\alpha }\exp \left( \frac{\alpha \pi }{2}%
\right) \exp \left( -\frac{\alpha \pi }{2}\exp (Mh)\right) .
\end{align*}%
Similarly,
\begin{align*}
h\sum\nolimits_{\ell =N+1}^{+\infty }\left\vert g_{\lambda ,\psi _{DE}}(\ell
h)\right\vert & \leq \frac{\pi }{2}\lambda ^{-1}\tau ^{1-\alpha
}h\sum\nolimits_{\ell =N+1}^{+\infty }\exp (-(1-\alpha )\pi \sinh (\ell
h))\cosh (\ell h) \\
& \leq \frac{\pi }{2}\lambda ^{-1}\tau ^{1-\alpha }h\int_{Nh}^{+\infty }\exp
(-(1-\alpha )\pi \sinh x)\cosh (x)dx \\
& \leq \frac{\lambda ^{-1}\tau ^{1-\alpha }}{2(1-\alpha )}\exp \left( \frac{%
(1-\alpha )\pi }{2}\right) \exp \left( -\frac{(1-\alpha )\pi }{2}\exp
(Nh)\right) .
\end{align*}

The above results are summarized as follows.

\begin{proposition}
Using the double-exponential transform, for the quadrature error it holds%
\begin{align}
\mathcal{E}_{M,N,h}(g_{\lambda ,\psi _{DE}}) &\leq \frac{1}{\alpha
(1-\alpha )}\xi (d)\lambda ^{-\alpha }\frac{e^{-\pi d/h}}{2\sinh (\pi d/h)}+
\label{de} \\
&\frac{\tau ^{-\alpha }}{2\alpha }\exp \left( \frac{\alpha \pi }{2}\right)
\exp \left( -\frac{\alpha \pi }{2}\exp (Mh)\right) +  \label{te1} \\
&\frac{\lambda ^{-1}\tau ^{1-\alpha }}{2(1-\alpha )}\exp \left( \frac{%
(1-\alpha )\pi }{2}\right) \exp \left( -\frac{(1-\alpha )\pi }{2}\exp
(Nh)\right) ,  \label{te2}
\end{align}%
where $\xi (d)$ is defined by (\ref{CD}).
\end{proposition}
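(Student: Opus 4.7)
The three lines (\ref{de}), (\ref{te1}), (\ref{te2}) in the statement match exactly the three pieces in the error decomposition $\mathcal{E}_{M,N,h}(f)\le \mathcal{E}_D+\mathcal{E}_{T_L}+\mathcal{E}_{T_R}$ set up at the start of Section~\ref{sec2}, specialised to $f=g_{\lambda ,\psi _{DE}}$. Unlike the SE case, Theorem~\ref{teo2} cannot be quoted off the shelf because the tail decay here is double-exponential rather than single-exponential; so my plan is to apply Theorem~\ref{teo1} for the discretization term and to reprove tailored tail bounds from the pointwise estimate~(\ref{31}).

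First, for the discretization error, I would check $g_{\lambda ,\psi _{DE}}\in B(\mathcal{D}_d)$ and then compute $\mathcal{N}(g_{\lambda ,\psi _{DE}},d)$. The bound on $|g_{\lambda ,\psi _{DE}}(x+i\eta )|$ already derived in the previous subsection factors out the $\eta$--dependence through the factor $\cos (\pi /2\sin \eta )$, so the task reduces to integrating $G_{\alpha }(x,\eta )\cosh x$ over $\mathbb{R}$. Splitting the $x$-integral at the crossover $x^{\ast }$ of (\ref{31}) and performing the substitution $u=\sinh x$, each half collapses to an elementary exponential integral, contributing $\frac{1}{\alpha \pi \cos \eta }$ and $\frac{1}{(1-\alpha )\pi \cos \eta }$ respectively at the matching point. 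Sending $\eta \to d^{-}$ gives $\mathcal{N}(g_{\lambda ,\psi _{DE}},d)\leq \frac{1}{\alpha (1-\alpha )}\xi (d)\lambda ^{-\alpha }$ with $\xi(d)$ as in (\ref{CD}), and Theorem~\ref{teo1} then delivers (\ref{de}).

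Second, for the left tail $\mathcal{E}_{T_{L}}$ I would set $\eta =0$ in (\ref{31}), giving the pointwise bound $|g_{\lambda ,\psi _{DE}}(\ell h)|\leq \frac{\pi }{2}\tau ^{-\alpha }\exp (\alpha \pi \sinh (\ell h))\cosh (\ell h)$ for $\ell$ sufficiently negative. Since this integrand is monotone increasing on $(-\infty ,0]$, the Riemann sum $h\sum _{\ell \leq -M-1}$ is dominated by $\int_{-\infty }^{-Mh}$, which integrates in closed form via $u=\sinh x$ to $\frac{\tau ^{-\alpha }}{2\alpha }\exp (-\alpha \pi \sinh (Mh))$. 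The elementary inequality $\sinh (Mh)\ge \tfrac{1}{2}\exp (Mh)-\tfrac{1}{2}$ then lands on (\ref{te1}). A symmetric argument applied to the right branch of (\ref{31}), this time with the prefactor $(\lambda /\tau )^{\alpha -1}$ rather than $(\lambda /\tau )^{\alpha }$ (which is why $\lambda ^{-1}\tau ^{1-\alpha }$ appears), yields (\ref{te2}); summing the three contributions finishes the proof.

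The step needing most care will be the evaluation of $\mathcal{N}(g_{\lambda ,\psi _{DE}},d)$: one has to track the $\cos \eta$ from each half of the $x$-integral and verify that, combined with the $\cos (\pi /2\sin \eta )$ coming from the denominator bound borrowed from \cite{OK1}, the limit $\eta \to d^-$ assembles into exactly $\xi (d)=2/\bigl(\cos d\cos (\pi /2\sin d)\bigr)$. A secondary nuisance is that the split in (\ref{31}) requires $-Mh\le x^{\ast }$ on the left and $Nh>x^{\ast }$ on the right, which is a mild constraint on how $\tau $ relates to $\lambda $; it costs nothing because $\tau $ is to be chosen in Section~\ref{sectau} precisely so that the tail regime of the sums is genuinely in the decaying branch.
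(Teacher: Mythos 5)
Your proposal follows essentially the same route as the paper: you bound $\mathcal{N}(g_{\lambda ,\psi _{DE}},d)\leq \frac{1}{\alpha (1-\alpha )}\xi (d)\lambda ^{-\alpha }$ by splitting the $x$-integral at $x^{\ast }$ and invoking Theorem \ref{teo1} for (\ref{de}), and you bound the tails exactly as the paper does, by comparison with the integrals of the $\eta =0$ bounds (evaluated via $u=\sinh x$) followed by $\sinh t\geq \frac{1}{2}e^{t}-\frac{1}{2}$. The only superfluous point is your worry about $Mh$, $Nh$ lying on the correct side of $x^{\ast }$: both branches of (\ref{31}) are valid for all $x$, since $y^{\alpha }/(1+y)\leq \min (y^{\alpha },y^{\alpha -1})$ for every $y>0$, so no relation between $\tau $ and $\lambda $ is needed there.
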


Defining
\begin{equation}
h=\ln \left( \frac{4dn}{\mu }\right) \frac{1}{n},\quad \text{for\quad }n\geq
\frac{\mu e}{4d},\quad \mu =\min (\alpha ,1-\alpha )  \label{h}
\end{equation}%
as in \cite[Theorem 2.14]{OK1}, we first observe that (see (\ref{de}))
\begin{equation}
\frac{\exp \left( -\frac{\pi d}{h}\right) }{2\sinh \left( \frac{\pi d}{h}%
\right) }\leq \frac{1}{1-e^{-\frac{\pi }{2}\mu e}}\exp \left( \frac{-2\pi dn%
}{\ln \left( \frac{4dn}{\mu }\right) }\right) .  \label{kk}
\end{equation}%
Setting $M=N=n$, the choice of $h$ as in (\ref{h}) leads to a truncation
error that decays faster than the discretization one, because for an
arbitrary constant $c$ (see (\ref{te1})-(\ref{te2}))%
\begin{equation*}
\exp \left( -c\exp \left( nh\right) \right) =\exp \left( -\frac{4cdn}{\mu }%
\right) .
\end{equation*}%
As consequence the idea is to assume the discretization error as estimator
for the global quadrature error, that is, using (\ref{de}) and (\ref{kk}),%
\begin{equation}
\mathcal{E}_{n,h}(g_{\lambda ,\psi _{DE}})=\mathcal{E}_{M,N,h}(g_{\lambda
,\psi _{DE}})\approx K_{\alpha }\xi (d)\lambda ^{-\alpha }\exp \left( \frac{%
-2\pi dn}{\ln \left( \frac{4dn}{\mu }\right) }\right) ,  \label{ere}
\end{equation}%
where%
\begin{equation}
K_{\alpha }=\frac{1}{\alpha (1-\alpha )}\frac{1}{1-e^{-\frac{\pi }{2}\mu e}}.
\label{cost}
\end{equation}%
Formula (\ref{ere}) is very similar to the one given in \cite[Theorem 2.14]%
{OK1}, that reads%
\begin{equation}
\mathcal{\hat E}_{M,N,h}(g_{\lambda ,\psi _{DE}})\approx \frac{\tau
^{-\alpha }}{\mu }\alpha (1-\alpha )\left( K_{\alpha }\xi (d)+e^{\frac{\pi }{%
2}\nu }\right) \exp \left( \frac{-2\pi dn}{\ln \left( \frac{4dn}{\mu }%
\right) }\right)   \label{ere2}
\end{equation}%
where $\nu =\max \left( \alpha ,1-\alpha \right) $ and $M=n$, $N=n-\chi$ (or
viceversa depending on $\alpha $), where $\chi>0$ is defined in order to
equalize the contribute of the truncation errors \cite[Theorem 2.11]{OK1}.
The important difference is given by the factor $\lambda ^{-\alpha }$ that
replaces $\tau ^{-\alpha }$, and this is crucial to correctly handle the
case of $\lambda \rightarrow +\infty $. In this situation the error of the
trapezoidal rule goes 0 because $g_{\lambda ,\psi _{DE}}(x)\rightarrow 0$ as
$\lambda \rightarrow +\infty $ (see (\ref{ide})). Anyway, as we shall see, $%
d\rightarrow 0$ as $\lambda \rightarrow +\infty $, so that the exponential
term itself is not able to reproduce this situation. An example is given in
Figure \ref{Figure2} in which we consider $\lambda =10^{12}$ and $\tau =100.$

\begin{figure}[htbp]
  \centering
\includegraphics[width=0.95\textwidth]{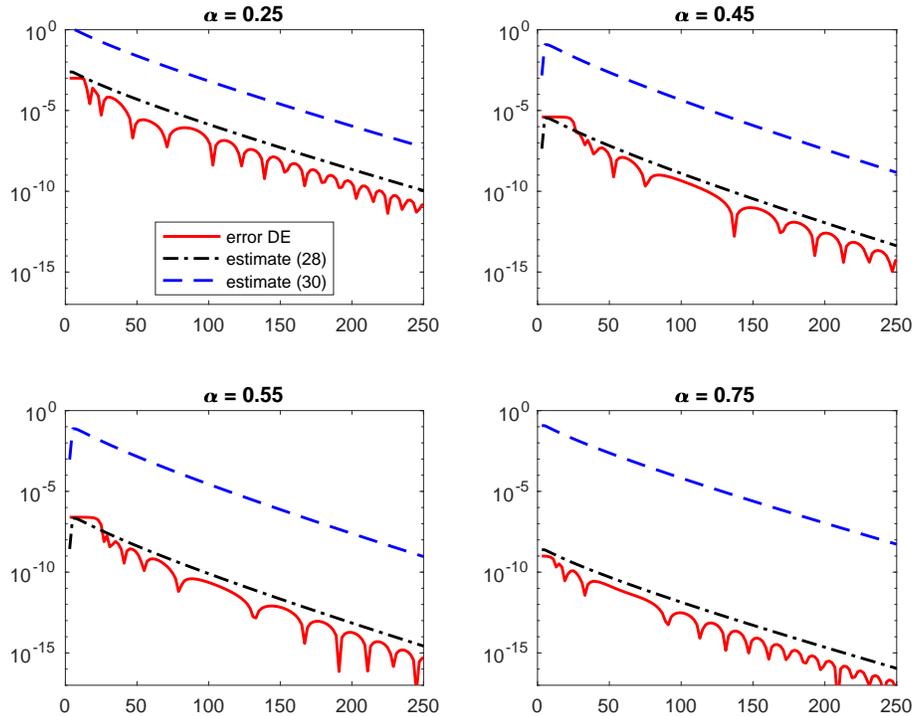}
\caption{Error for the trapezoidal rule applied with the double-exponential transform
(error DE), estimates (\ref{ere}) and (\ref{ere2}) vs the number of inversions, for the
computation of $\lambda ^{-\alpha }$ with $\lambda %
=10^{12}$ and $\tau =100$.}
\label{Figure2}
\end{figure}


\subsection{The poles of the integrand function}

All the analysis presented so far is based on the assumption that the
integrand function%
\begin{equation*}
g_{\lambda ,\psi _{DE}}(x)=\frac{\pi }{2}\tau ^{1-\alpha }\frac{\exp \left(
\alpha \pi \sinh x\right) }{\tau +\lambda \exp \left( \pi \sinh x\right) }%
\cosh x
\end{equation*}%
is analytic in the strip $\mathcal{D}_{d}$, for a certain $d=d(\lambda ,\tau
)$. Therefore we have to study the poles of this function, that is, we have
to study the equation%
\begin{equation*}
\tau +\lambda \exp \left( \pi \sinh x\right) =0.
\end{equation*}%
We have%
\begin{align*}
\exp \left( \pi \sinh x\right)  &=\frac{\tau }{\lambda }e^{i\pi }, \\
\sinh x &=\frac{1}{\pi }\ln \frac{\tau }{\lambda }+i(2k+1),\quad k\in
\mathbb{Z}.
\end{align*}%
By solving the above equation for each $k$, we obtain the complete set of
poles. Assuming to work with the principal value of the logarithm and taking
$k=0$, we obtain the poles closest to the real axis $x_{0}$ and its
conjugate $\overline{x_{0}}$, where%
\begin{align}
x_{0} &=\sinh ^{-1}\left( \frac{1}{\pi }\ln \frac{\tau }{\lambda }+i\right)
\notag \\
&=\ln \left( \frac{1}{\pi }\ln \frac{\tau }{\lambda }+i+\sqrt{\left( \frac{1%
}{\pi }\ln \frac{\tau }{\lambda }\right) ^{2}+2i\frac{1}{\pi }\ln \frac{\tau
}{\lambda }}\right) .  \label{x0}
\end{align}%
In order to apply the bound on the strip we have to define%
\begin{equation}
d=d(\lambda ,\tau )=r\func{Im}x_{0},\quad 0<r<1.  \label{d}
\end{equation}%
The introduction of the factor $r$ is necessary to avoid ${\xi }{%
(d)\rightarrow +\infty }$ as $\func{Im}x_{0}\rightarrow \pi /2$, which
verifies for $\lambda \rightarrow \tau $ (see (\ref{CD})).


\subsubsection{Asymptotic behaviors}

Setting
\begin{equation*}
s=\frac{1}{\pi }\ln \frac{\lambda }{\tau },
\end{equation*}%
we have
\begin{equation*}
\frac{1}{\pi }\ln \frac{\tau }{\lambda }=-s,
\end{equation*}%
and therefore we can write (\ref{x0}) as
\begin{equation*}
x_{0}=\ln \left( s\left( -1+\frac{i}{s}+\sqrt{1-\frac{2i}{s}}\right) \right).
\end{equation*}%
Assuming $\lambda \gg \tau $, that is, $s\gg 1$, and using%
\begin{equation}
\sqrt{1-x}\approx 1-\frac{1}{2}x-\frac{1}{8}x^{2}-\frac{1}{16}x^{3},\quad
x\approx 0,  \label{sv4}
\end{equation}%
we obtain%
\begin{equation*}
\sqrt{1-\frac{2i}{s}}\approx 1-\frac{i}{s}+\frac{1}{2s^{2}}+\frac{i}{2s^{3}}.
\end{equation*}%
Using also $\ln (1+x)\approx x$,%
\begin{align*}
x_{0} &\approx \ln \left( s\left( -1+\frac{i}{s}+1-\frac{i}{s}+\frac{1}{%
2s^{2}}+\frac{i}{2s^{3}}\right) \right)  \\
&=\ln \left( s\left( \frac{1}{2s^{2}}+\frac{i}{2s^{3}}\right) \right)  \\
&=\ln \left( \frac{1}{2s}\right) +\ln \left( 1+\frac{i}{s}\right)  \\
&\approx \ln \left( \frac{1}{2s}\right) +\frac{i}{s}.
\end{align*}%
Therefore, for $\lambda \gg \tau $,%
\begin{equation}
\func{Im}x_{0}\approx \frac{1}{s}=\frac{\pi }{\ln \frac{\lambda }{\tau }}.
\label{app1}
\end{equation}

Assume now $\lambda =1$ and $\tau \gg 1$. By (\ref{x0}) we have%
\begin{equation*}
x_{0}=\ln \left( \frac{1}{\pi }\ln \tau +i+\sqrt{\left( \frac{1}{\pi }\ln
\tau \right) ^{2}+2i\frac{1}{\pi }\ln \tau }\right) .
\end{equation*}%
Setting%
\begin{equation*}
s=\frac{1}{\pi }\ln \tau ,
\end{equation*}%
we have%
\begin{align*}
x_{0} &=\ln \left( s\left( 1+\frac{i}{s}+\sqrt{1+\frac{2i}{s}}\right)
\right) \\
&\approx \ln \left( s\left( 1+\frac{i}{s}+1+\frac{i}{s}\right) \right) \\
&=\ln \left( 2s\left( 1+\frac{i}{s}\right) \right) \\
&\approx \ln \left( 2s\right) +\frac{i}{s},
\end{align*}%
that finally leads to%
\begin{equation}
\func{Im}x_{0}\approx \frac{1}{s}=\frac{\pi }{\ln \tau }.  \label{app2}
\end{equation}


\subsection{The minimax problem}

Let us define the function%
\begin{equation*}
\varphi (\lambda ,\tau )=\xi (d)\lambda ^{-\alpha }\exp \left( \frac{-2\pi dn%
}{\ln \left( \frac{4dn}{\mu }\right) }\right) ,\quad d=d(\lambda ,\tau ),
\end{equation*}%
representing the $(\lambda ,\tau )$-dependent factor of the error estimate
given by (\ref{ere}), that is,%
\begin{equation*}
\mathcal{E}_{n,h}(g_{\lambda ,\psi _{DE}})\approx K_{\alpha }\varphi
(\lambda ,\tau ),
\end{equation*}%
where $K_{\alpha }$ is defined by (\ref{cost}). Since our aim is to work
with a self-adjoint operator with spectrum contained in $[1, +\infty )$ the
problem consists in defining properly the parameter $\tau.$ This can be
done by solving%
\begin{equation}
\min_{\tau \geq 1}\max_{\lambda \geq 1}\varphi (\lambda ,\tau ).
\label{minmax}
\end{equation}%
As for the true error, experimentally one observes that $\tau $ must be
taken much greater than 1, independently of $\alpha $. Therefore, from now
on the analysis will be based on the assumption $\tau \gg 1$. Regarding the
function $\varphi (\lambda ,\tau )$, by taking $d=d(\lambda ,\tau )$ as in (\ref{d}) 
and $n$ sufficiently large, again, one experimentally observes that
with respect to $\lambda $ the function initially decreases, reaches a local
minimum (for $\lambda =\tau $ in which $d=r\pi /2$), then a local maximum
(much greater than $\tau $), and finally goes to 0 for $\lambda \rightarrow
+\infty $ (see Figure \ref{Figure3}). In this view, denoting by $\overline{%
\lambda }$ the local maximum, for $n$ sufficiently large the problem (\ref{minmax}) reduces to the solution of%
\begin{equation}
\varphi (1,\tau )=\varphi (\overline{\lambda },\tau ).  \label{eqt}
\end{equation}


\subsubsection{Evaluating the local maximum}

Since $0<d\leq r\pi /2$, $0<r<1$, we have%
\begin{equation*}
0<C\leq \cos d\cos \left( \frac{\pi }{2}\sin d\right) <1,
\end{equation*}%
where $C$ is a constant depending on $r$. Therefore by (\ref{CD}),%
\begin{equation*}
2 < \xi (d)\leq \frac{2}{C},
\end{equation*}%
so that we neglect the contribute of this function in what follows.

Since the maximum is seen to be much larger than $\tau $, we consider the
approximation (\ref{app1}). Therefore we have to solve%
\begin{equation*}
\frac{d}{d\lambda }\lambda ^{-\alpha }\exp \left( -\frac{2\pi r\frac{\pi }{%
\ln \frac{\lambda }{\tau }}n}{\ln \left( \frac{4}{\mu }nr\frac{\pi }{\ln
\frac{\lambda }{\tau }}\right) }\right) =0,
\end{equation*}%
that, after some manipulation leads to%
\begin{equation*}
\frac{d}{d\lambda }\lambda ^{-\alpha }\exp \left( -\frac{c_{1}n}{\ln \frac{%
\lambda }{\tau }\,q(\lambda )}\right) =0,
\end{equation*}%
where%
\begin{equation}
c_{1}=2\pi ^{2}r,\quad q(\lambda )=\ln \left( c_{2}n\right) -\ln \left( \ln
\frac{\lambda }{\tau }\right) ,\quad c_{2}=\frac{4}{\mu }\pi r.  \label{c12}
\end{equation}

We find the equation%
\begin{equation*}
-\alpha \lambda ^{-1}-\frac{d}{d\lambda }\left( \frac{c_{1}n}{\ln \frac{%
\lambda }{\tau } \, q(\lambda) }\right) =0,
\end{equation*}%
and since%
\begin{align*}
\frac{d}{d\lambda }\left( \frac{c_{1}n}{\ln \frac{\lambda }{\tau } \,
q(\lambda) }\right) &=\frac{c_{1}n}{\lambda }\frac{1-q(\lambda )}{\left(
\ln \frac{\lambda }{\tau }\right) ^{2}q(\lambda )^{2}},
\end{align*}%
we finally have to solve%
\begin{equation}
\alpha +c_{1}n\frac{1-q(\lambda )}{\left( \ln \frac{\lambda }{\tau }\right)
^{2}q(\lambda )^{2}}=0.  \label{eqs}
\end{equation}

For large $n$ we have%
\begin{align*}
q(\lambda ) &\approx \ln \left( c_{2}n\right) , \\
\frac{q(\lambda )-1}{q(\lambda )^{2}} &\approx \frac{1}{q(\lambda )}\approx
\frac{1}{\ln \left( c_{2}n\right) },
\end{align*}%
so that the solution of (\ref{eqs}) can be approximated by%
\begin{equation}
\lambda ^{\ast }=\tau \exp \left( \sqrt{\frac{c_{1}n}{\alpha \ln \left(
c_{2}n\right) }}\right) .  \label{m2}
\end{equation}%
For any given $\tau \geq 1,$ it can be observed experimentally that $\lambda
^{\ast }$ is a very good approximation of the local maximum (see  Figure \ref{Figure3}).

We also remark that the assumption on $n$ stated in (\ref{h}), that leads to
the error estimate (\ref{ere}), is automatically fulfilled for $\lambda
=\lambda ^{\ast }$, at least for $\alpha $ not too small. Indeed, using (\ref{d}) and (\ref{app1}) we first observe that
\begin{equation}
d(\lambda ^{\ast },\tau )\approx \frac{r\pi }{\ln \frac{\lambda ^{\ast }}{%
\tau }}=r\pi \sqrt{\frac{\alpha \ln \left( c_{2}n\right) }{c_{1}n}}.
\label{dbd}
\end{equation}%
Then by (\ref{c12}), using $\mu \leq 1/2$ and assuming for instance $0.9<r<1,$
after some simple computation we find
\begin{align*}
\frac{\mu e}{4d(\lambda ^{\ast },\tau )} &\approx \frac{\mu e}{4\pi r}\sqrt{%
\frac{c_{1}n}{\alpha \ln \left( c_{2}n\right) }} \\
&\leq \frac{1}{3}\sqrt{\frac{n}{\alpha }}.
\end{align*}%
Therefore the condition (\ref{h}) holds true for $n\geq 1/(9\alpha).$


\subsubsection{The error at the local maximum}

By (\ref{dbd}) clearly   $d(\lambda^{\ast },\tau )\rightarrow 0$ for $%
n\rightarrow +\infty $, and therefore from (\ref{CD}) we deduce that $\xi
(d(\lambda ^{\ast },\tau ))\rightarrow 2$ for $n\rightarrow +\infty $. As
consequence%
\begin{equation*}
\varphi (\lambda ^{\ast },\tau )\approx 2\left( \lambda ^{\ast }\right)
^{-\alpha }\exp \left( \frac{-2\pi d(\lambda ^{\ast },\tau )n}{\ln \left(
\frac{4d(\lambda ^{\ast },\tau )n}{\mu }\right) }\right) .
\end{equation*}%
By defining
\begin{equation}
s_{n}=\sqrt{\frac{c_{1}n}{\ln \left( c_{2}n\right) }},  \label{sn}
\end{equation}%
from (\ref{m2}) and (\ref{dbd}) we have%
\begin{align*}
\lambda ^{\ast } &=\tau \exp \left( \frac{s_{n}}{\sqrt{\alpha }}\right) , \\
d(\lambda ^{\ast },\tau ) &\approx \frac{\sqrt{\alpha }r\pi }{s_{n}},
\end{align*}%
and hence, after some computation%
\begin{align*}
\left( \lambda ^{\ast }\right) ^{-\alpha }\exp \left( \frac{-2\pi d(\lambda
^{\ast },\tau )n}{\ln \left( \frac{4d(\lambda ^{\ast },\tau )n}{\mu }\right)
}\right)  &\approx \tau ^{-\alpha }\exp \left( -\sqrt{\alpha }s_{n}\right)
\exp \left( \frac{-2\pi \frac{\sqrt{\alpha }r\pi }{s_{n}}n}{\ln \left( \frac{%
4\frac{\sqrt{\alpha }r\pi }{s_{n}}n}{\mu }\right) }\right)  \\
&=\tau ^{-\alpha }\exp \left( -\sqrt{\alpha }\left( s_{n}+\frac{c_{1}n}{%
s_{n}\ln \left( c_{2}n\frac{\sqrt{\alpha }}{s_{n}}\right) }\right) \right) .
\end{align*}%
By (\ref{sn}) we have%
\begin{align*}
s_{n}+\frac{c_{1}n}{s_{n}\ln \left( c_{2}n\frac{\sqrt{\alpha }}{s_{n}}%
\right) } &=\sqrt{\frac{c_{1}n}{\ln \left( c_{2}n\right) }}+\frac{c_{1}n}{%
\sqrt{\frac{c_{1}n}{\ln \left( c_{2}n\right) }}\ln \left( c_{2}n\frac{\sqrt{%
\alpha }}{s_{n}}\right) } \\
&=\sqrt{\frac{c_{1}n}{\ln \left( c_{2}n\right) }}\left( 1+\frac{\ln \left(
c_{2}n\right) }{\ln \left( c_{2}n\frac{\sqrt{\alpha }}{s_{n}}\right) }%
\right)  \\
&\approx 3\sqrt{\frac{c_{1}n}{\ln \left( c_{2}n\right) }},
\end{align*}%
because
\begin{equation*}
\frac{\ln \left( c_{2}n\right) }{\ln \left( c_{2}n\frac{\sqrt{\alpha }}{s_{n}%
}\right) }\rightarrow 2\text{\quad for\quad }n\rightarrow +\infty .
\end{equation*}%
Joining the above approximations we finally obtain%
\begin{align}
\varphi (\lambda ^{\ast },\tau ) &\approx 2\tau ^{-\alpha }\exp \left( -3%
\sqrt{\alpha }\sqrt{\frac{c_{1}n}{\ln \left( c_{2}n\right) }}\right)   \notag
\\
&=2\tau ^{-\alpha }\exp \left( -3\sqrt{\alpha }s_{n}\right) .  \label{errm2}
\end{align}


\subsubsection{Error at $\lambda =1$}

By (\ref{app2}), that is,
\begin{equation*}
d(1,\tau )\approx r\frac{\pi }{\ln \tau },\quad \tau \gg 1,
\end{equation*}%
we have again $\xi (d(1,\tau ))\approx 2$ and therefore
\begin{equation*}
\varphi (1,\tau )\approx 2\exp \left( \frac{-2\pi d(1,\tau )n}{\ln \left(
\frac{4d(1,\tau )n}{\mu }\right) }\right) .
\end{equation*}%
Using (\ref{c12}) we find%
\begin{align}
\varphi (1,\tau ) &\approx 2\exp \left( -\frac{2\pi r\frac{\pi }{\ln \tau }n%
}{\ln \left( \frac{4}{\mu }nr\frac{\pi }{\ln \tau }\right) }\right)  \notag
\\
&=2\exp \left( -\frac{c_{1}n}{\ln \tau \left( \ln \left( c_{2}n\right) -\ln
\left( \ln \tau \right) \right) }\right)  \notag \\
&\approx 2\exp \left( -\frac{c_{1}n}{\ln \tau \ln \left( c_{2}n\right) }%
\right)  \notag \\
&=2\exp \left( -\frac{s_{n}^{2}}{\ln \tau }\right) .  \label{t}
\end{align}


\subsection{Approximating the optimal value for $\protect\tau $}

\label{sectau}

We need to solve (\ref{eqt}). Using the approximations (\ref{t}) and (\ref{errm2}) we impose%
\begin{align*}
\exp \left( -\frac{s_{n}^{2}}{\ln \tau }\right) &=\tau ^{-\alpha }\exp
\left( -3\sqrt{\alpha }s_{n}\right) \\
&=\exp \left( -3\sqrt{\alpha }s_{n}-\alpha \ln \tau \right) ,
\end{align*}%
that is,%
\begin{equation*}
-\frac{s_{n}^{2}}{\ln \tau }=-3\sqrt{\alpha }s_{n}-\alpha \ln \tau .
\end{equation*}%
Solving the above equation we find%
\begin{align*}
\ln \tau &=\frac{\left( -3+\sqrt{13}\right) \sqrt{\alpha }s_{n}}{2\alpha }
\\
&\approx 0.3\frac{s_{n}}{\sqrt{\alpha }},
\end{align*}%
so that%
\begin{equation}
\tau ^{\ast }=\exp \left( 0.3\frac{s_{n}}{\sqrt{\alpha }}\right)  \label{tau}
\end{equation}%
represents an approximate solution of (\ref{eqt}).

\begin{figure}[htbp]
  \centering
\includegraphics[width=0.95\textwidth]{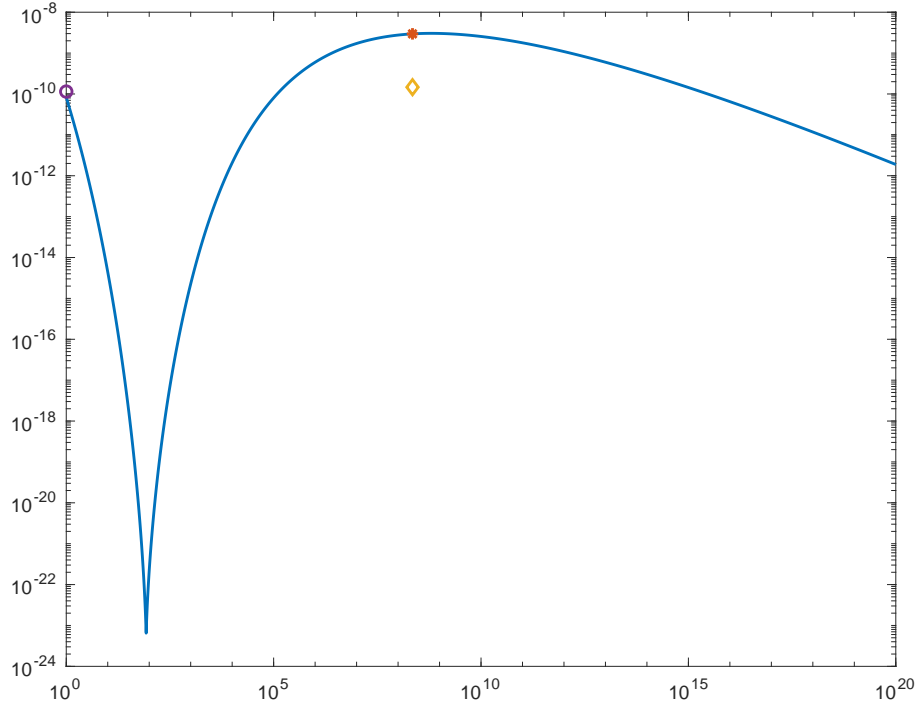}
\caption{Plot of the function $\varphi (\lambda,\tau ^{\ast })$ for $n=40$ and $\alpha =1/2$. The asterisk
represents the approximation of the local maximum given by (\ref{m2}), that is, the point 
$\left( \lambda ^{\ast },\varphi (\lambda ^{\ast },\tau ^{\ast })\right) $. The diamond
represents the approximation of $\varphi (\lambda ^{\ast },%
\tau ^{\ast })$ stated in (\ref{errm2}). Finally the circle
is the approximation of $\varphi (1,\tau ^{\ast })$ given in
(\ref{t}).}
\label{Figure3}
\end{figure}
In Figure \ref{Figure3} we plot the function $\varphi (\lambda ,\tau ^{\ast
})$ for $\lambda \in \lbrack 1,10^{20}]$, in an example in which $n=40$, $%
\alpha =1/2$, and $\tau ^{\ast }\cong 84.4$ defined by (\ref{tau}). Moreover
we show the results of the approximations (\ref{m2}), (\ref{errm2}) and (\ref{t}), for $\tau =\tau ^{\ast }$. Clearly the ideal situation would be to
have $\tau ^{\ast }$ such that $\varphi (1,\tau ^{\ast })=\varphi (\overline{%
\lambda },\tau ^{\ast })$, but notwithstanding all the approximations used,
the results are fairly good and allow to have a simple expression for $\tau
^{\ast }$.

By using (\ref{tau}) in (\ref{errm2}) we obtain%
\begin{equation}
\varphi (\lambda ^{\ast },\tau ^{\ast })\approx 2\exp \left( -3.3\sqrt{%
\alpha }\sqrt{\frac{c_{1}n}{\ln \left( c_{2}n\right) }}\right) .  \label{gtt}
\end{equation}%
Remembering that%
\begin{equation*}
E_{n,h}(\mathcal{L})\leq 2\frac{\sin \left( \alpha \pi \right) }{\pi }%
\max_{\lambda \geq 1}\mathcal{E}_{n,h}(g_{\mathcal{\lambda },\psi _{DE}}),
\end{equation*}%
using (\ref{gtt}) we finally obtain the error estimate%
\begin{equation}
E_{n,h}(\mathcal{L})\approx \overline{K}_{\alpha }\exp \left( -3.3\sqrt{%
\alpha }\sqrt{\frac{c_{1}n}{\ln \left( c_{2}n\right) }}\right) ,
\label{fest}
\end{equation}%
where%
\begin{align*}
\overline{K}_{\alpha } &=4\frac{\sin \left( \alpha \pi \right) }{\pi }%
K_{\alpha } \\
&=4\frac{\sin \left( \alpha \pi \right) }{\pi }\frac{1}{\alpha (1-\alpha )}%
\frac{1}{1-e^{-\frac{\pi }{2}\mu e}}.
\end{align*}%
In Figure \ref{Figure4} we show the behavior of the method for the
computation of $\mathcal{L}^{-\alpha }$, with $\mathcal{L}$ defined in (\ref{scex}), together with the estimate (\ref{fest}). For comparison, in the
same pictures we also plot the error of the SE approach. As mentioned in the
introduction, the DE approach appears to be faster for $1/2 \le \alpha <1.$

\begin{figure}[htbp]
  \centering
\includegraphics[width=0.95\textwidth]{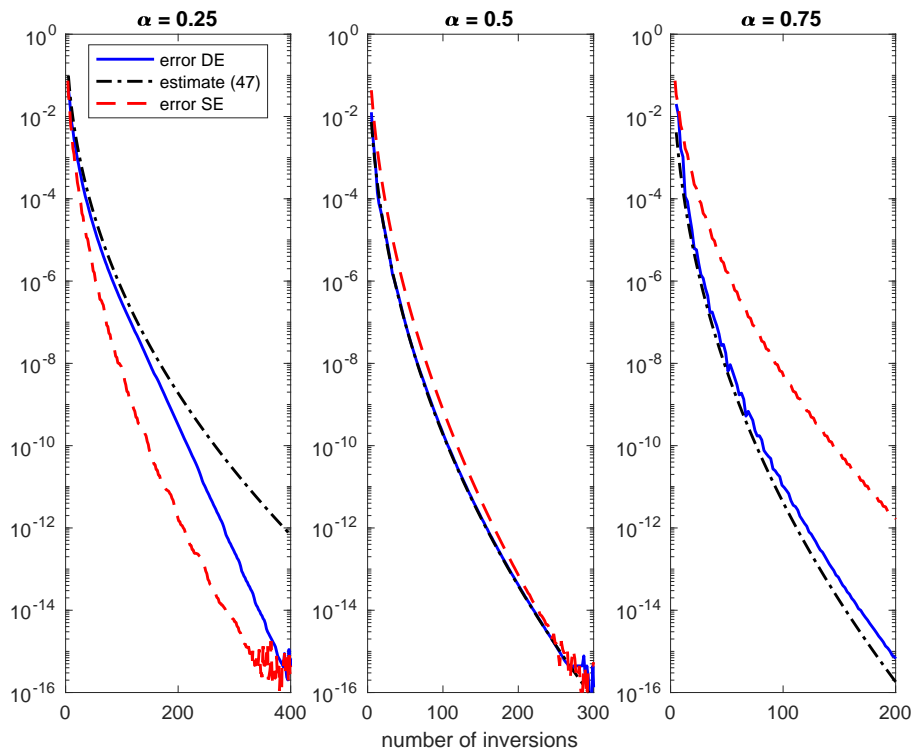}
\caption{Error for the trapezoidal rule applied with the double-exponential
transform (error DE), with the single-exponential transform (error SE) and
error estimate given by (\protect\ref{fest}).}
\label{Figure4}
\end{figure}

\section{Conclusions}

\label{sec5}

In this work we have analyzed the behavior of the trapezoidal rule for the
computation of $\mathcal{L}^{-\alpha }$, in connection with the single and
the double-exponential transformations. All the analysis has been based on
the assumption of $\mathcal{L}$ unbounded, so that the results can be
applied even to discrete operators, with spectrum arbitrarily large, without
the need to know its amplitude, that is, the largest eigenvalue. In
particular we have revised the analysis for the single-exponential transform
and we have introduced new error estimates for the scalar and the operator
case for the double-exponential transform. The sharp estimate obtained for
the scalar case has been fundamental for the proper selection of the
parameter $\tau $ that is necessary to obtain good results also for the
operator case.

\end{document}